\newtheorem{theorem}{Theorem}
\theoremstyle{definition}
\newtheorem{defin}{Definition}
\newtheorem{prop}{Proposition}
\newtheorem{remark}{Remark}
\newtheorem{lemma}{Lemma}
\newtheorem{corollary}{Corollary}
\def\v{v^{}_{G,x,y}}
\def\Zt{(Z_t)^{}_{t\ge 0}}
\newcommand{\Id}{\mathbbm 1}
\newcommand{\btimes}{\mbox{\huge \raisebox{-0.2ex}{$\times$}}}
\newcommand{\cM}{\mathcal{M}}
\def\EE{\mathbb{E}}
\begin{document}

\title{Moment closure in a Moran model with recombination}

\author{Ellen Baake}
\address[Ellen Baake]{Technische Fakult\"at, Universit\"at Bielefeld, 
Box 100131, 33501 Bielefeld, Germany}
\email[Ellen Baake]{ebaake@techfak.uni-bielefeld.de}

\author{Thiemo Hustedt}
\address[Thiemo Hustedt]{Forschungsschwerpunkt Mathematisierung, Universit\"at Bielefeld, 
Box 100131, 33501 Bielefeld, Germany}
\email[Thiemo Hustedt]{thustedt@uni-bielefeld.de}

\subjclass[2000]{Primary: 92D15; Secondary: 60J28}

\keywords{Moment closure; Moran model; recombination}

\begin{abstract}
We extend the Moran model with single-crossover recombination to include general recombination and mutation. We show that, in the case without resampling, the expectations of products of marginal processes defined via partitions of sites form a closed hierarchy, which is exhaustively described by a finite system of differential equations. One thus has the exceptional situation of moment closure in a nonlinear system. Surprisingly, this property is lost when resampling (i.e., genetic drift) is included.
\keywords{Moment closure; Moran model; Recombination} 

\end{abstract}

\maketitle

\section{Introduction}

In recent years, the processes of population genetics, which describe the
genetic structure of populations under the influence of evolutionary
forces such as mutation, selection, recombination, migration, and
genetic drift, have been a rich source of fascinating probabilistic
problems. More precisely, the dynamics is often well understood in the
limit of infinite population size, where a law of large numbers leads to
a deterministic description (in terms of discrete dynamical systems or
differential equations), but great challenges ensue if the population is
finite, in particular if there is interaction between individuals, such as
competition (selection) or recombination (the combination of genetic
material of two parents into the `mixed' genetic type of an offspring); see \cite{burger, durrett, ewens}. Interactions usually make the infinite-population model
nonlinear and, often, already difficult enough to treat. In the
corresponding stochastic model, they are reflected by transition rates
(or probabilities) that depend nonlinearly on the current state of the
system and often result in processes whose treatment provides enormous
challenges. Even the relationship between the stochastic process and its
deterministic counterpart is usually unclear (apart from the infinite
population limit). In particular, the expectation of the stochastic
process is, usually, not given by the corresponding deterministic
dynamics - in general, such coincidence is reserved for populations of
individuals that evolve independently (as in branching processes); or
systems with interactions that do not change the expectation (like
Wright-Fisher sampling).

Indeed, even the analysis of the expectation is difficult in most processes
of population genetics with interaction. Its dynamics does, usually, not
only depend on the current expectation, but on higher moments, whose
change, in turn, depends on even higher moments. Formulating this
hierarchy of dependencies is
a common approach for stochastic processes arising in various
applications in physics, chemistry, and biology \cite{levermore, chang, dieckmann}.
Usually, this hierarchy continues indefinitely (it does not `close'); to
extract at least an approximation to the (lower) moments of interest,
some method of `moment closure' must be employed (in the simplest case, a
truncation) \cite{dieckmann}.

The corresponding deterministic systems (that arise through a law of large numbers) are also often tackled via systems of moments or cumulants, see \cite[Ch. V.4]{burger} for an overview. Models of recombination take a special role between
linear and nonlinear models. Although there is abundant interaction and
hence nonlinearity, the deterministic system that describes the
frequencies of all possible (geno)types may be (exactly) transformed
into a linear one by embedding it into a higher-dimensional space (more
explicitly, by adding further components that correspond to products of
type frequencies). This method is known as Haldane linearisation \cite{HaleRingwood}. The
underlying linear structure even allows a diagonalisation and explicit
solution, see \cite{ute} and references therein. In certain important special
cases (notably, in so-called single-crossover dynamics in continuous time), this solution is surprisingly simple and immediately plausible \cite{reco, MB}.

Elucidating underlying linear structures in the corresponding stochastic
system (more precisely, in the Moran model with recombination) has only
started very recently. In the aforementioned single-crossover case, Bobrowski et al. \cite{bobrowski} analysed the asymptotic behaviour  in the presence of mutation. Baake and Herms \cite{baakeherms} observed that the expected type frequencies in the finite system (but without genetic drift)
follow those in the deterministic model; this could be explained by the
(conditional) independence of certain marginalised processes that appear
as `subsystems' of the stochastic model. This and other results now lead to
the question whether in the general recombination scheme (i.e., not restricted to single crossovers) the dynamics of the expectations may be embedded into a higher but finite dimensional space, such that they are given by a finite system of differential equations? Is there an equivalent of Haldane
linearisation in the sense of moments?

This article will address these questions in the framework of the Moran
model with recombination and mutation. In particular it will show that the system of moments closes here after a finite number of steps, without any need for approximations, as long as there is no genetic drift. This may be considered as a
stochastic analogue of Haldane linearisation.

\section{Moran model with recombination}

We consider a population of $N$ individuals. Each of them is endowed with the set $S=\{1,\dots,n\}$ of sites. These can be interpreted as nucleotide positions in a string of DNA or as gene loci on a chromosome. For each site $i$ there is a finite set $X_i$ of  alleles that may occur at site $i$. A string of alleles is then called a type,  $X:=\btimes^n_{i=1}X_i$ is the  type space.

We are interested in modelling  recombination, which means the rearrangement of genetic material in sexually reproducing populations. It may occur during meiosis, the creation of gametes, that is egg cells or sperm. Homologous chromosomes may cross over at some points and exchange the genetic material in between (see Figure \ref{rekoschema}).

\begin{figure}
\begin{center}
\includegraphics[scale=0.4]{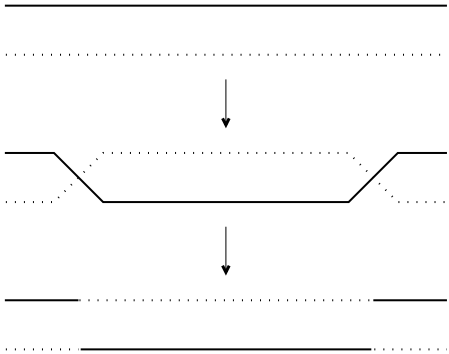}
\caption{Recombination}\label{rekoschema}
\end{center}
\end{figure}

In the following we will assign recombination events to subsets of sites in a natural way. Let $G\subset S$. Then the corresponding recombination event between two individuals is the following: the alleles at the sites given by $G$ remain at their positions, whereas the alleles at the sites in $\bar G$, the complement of $G$, are exchanged (see Figure \ref{sites_reco}). 
\begin{figure}
 \begin{center}
  \includegraphics[scale=0.4]{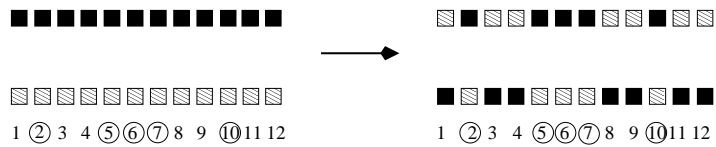}
\caption{Recombination event defined by the set $G$ (circled sites)}\label{sites_reco}
 \end{center}
\end{figure}

We define the mappings $p^{}_G:X\times X\rightarrow X$, $G\subset S$ by
\begin{equation}\label{pG}
p^{}_G(x,y)=\ :\Big(\btimes_{i\in G}\{x^{}_i\}\Big) \times \Big(\btimes_{i\in \bar G}\{y^{}_i\}\Big): \ ,
\end{equation}
where   $:\dots :$ means that the coordinates are ordered as in $X$. So, $p^{}_G(x,y)$ and $p^{}_{\bar G}(x,y)$ are the new types resulting from the recombination event corresponding to $G$ between the types $x$ and $y$.
Obviously, $p^{}_G(x,y)=p^{}_{\bar G}(y,x)$. So, $G$ and $\bar G$ essentially correspond to the same recombination event.

We now define a Moran model with recombination and mutation. Each individual undergoes  recombination events corresponding to $G\subset S$ at rate $\varrho^{}_G/4\ge 0$ for all $G\subset S$. The recombination partner is chosen out of the whole population (including the opening individual itself). Then they exchange their genetic material according to the recombination event corresponding to $G$ (see Figure \ref{moranmodel}). 
\begin{figure}
 \begin{center}
\psfrag{rho}{$\varrho^{}_G$}
\psfrag{N ind}{$N$ individuals}
\psfrag{pg}{{$p^{}_G(x,y)$}}
\psfrag{pgq}{{$p^{}_{\bar G}(x,y)$}}
\psfrag{t}{$t$}
\psfrag{t1}{$t_1$}
\psfrag{punkte}{$\cdot\cdots\cdot$}
  \includegraphics[scale=0.4]{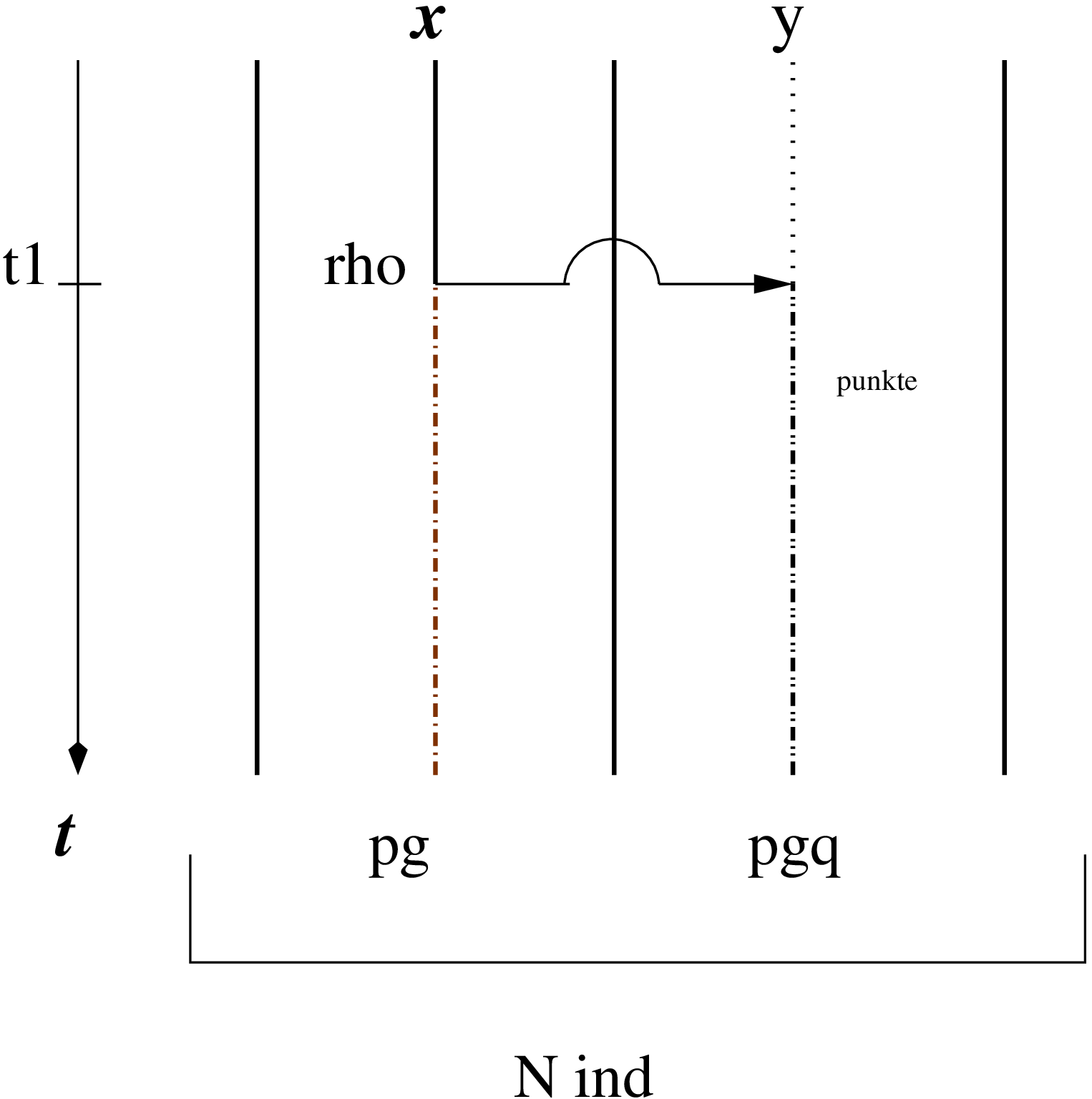}
\caption{Moran model with recombination. At time $t^{}_1$ the second individual, which is of type $x$, undergoes a recombination event corresponding to $G$ and chooses its partner randomly, here the fourth individual, which is of type $y$; from that time on, the individuals are of type $p^{}_G(x,y)$ and $p^{}_{\bar G}(x,y)$.}\label{moranmodel}
 \end{center}
\end{figure}
To keep things well-defined, the recombination rates $\varrho^{}_G$ have the properties $\varrho^{}_G=\varrho^{}_{\bar G}$ and $\varrho^{}_\varnothing=\varrho^{}_S=0$. 

Furthermore, mutation events may occur. An allele $x^{}_i\in X_i$ at site $i$ mutates into allele $y^{}_i\in X_i$ with rate $\mu^{i}_{x^{}_iy^{}_i}\ge 0$. Thus, the mutation rate depends on both the parental and the offspring allele. 

Additionally, we introduce birth events or, more precisely, resampling. Each individual produces an offspring at rate $b/2\ge 0$. The offspring inherits the parent's type and replaces another individual, randomly chosen from the entire population (again including the parent individual).

In the following we are interested in the composition of the population, so we define the stochastic process $\Zt$   with state space $E:=\{\omega \ \text{counting measure on} \ X\ \text{with}\ \omega(X)=N\}$, by
\[Z_t(\{x\}):=  \text{number of individuals of type} \ x.\]
In the following we will use shorthands like $Z_t(x)$, $z(x)$ instead of $Z_t(\{x\})$, $z(\{x\})$. Recombination, mutation and resampling events  induce the following transitions if $Z_t=z$:

\begin{equation}\label{recombination}
\begin{split}
 & z\rightarrow z + \v \quad \text{with}\quad\v:= -\delta_x-\delta_y + \delta^{}_{p^{}_G(x,y)} +\delta^{}_{p^{}_{\bar G} (x,y)}\\
 & \text{at rate} \quad\frac{1}{N} \varrho^{}_G z(x)z(y)\quad \text{for} \quad x,y \in X, \quad G\subset S,
\end{split}
\end{equation}

\begin{equation}\label{mutation}
z\rightarrow z - \delta^{}_{(x^{}_1,\dots,x^{}_i,\dots,x^{}_{n})} + \delta^{}_{(x^{}_1,\dots,y^{}_i,\dots,x^{}_n)}
\quad\text{at rate}\quad \mu^i_{x_iy_i} z(x),
\end{equation}

\begin{equation}\label{resampling}
 z\rightarrow z+\delta^{}_x-\delta^{}_y \quad\text{at rate} \quad \frac{b}{2N}z(x)z(y). 
\end{equation}
The rate in \eqref{recombination} is determined in the following way: An individual of type $x$ recombines at rate $\frac{1}{4}\varrho^{}_Gz(x)$ and chooses one individual of type $y$ with probability $\frac{z(y)}{N}$. This leads to the rate $\frac{1}{4N} \varrho^{}_G z(x)z(y)$ which needs to be multiplied by $4$ to account for the fact that the recombination could be initiated by an individual of type $y$ and that recombination according to $G$ is the same as recombination according to $\bar G$.

A brief comment on the model is in order. We consider recombination and reproduction as independent events whereas, in true biology, recombination is coupled to reproduction. We use the decoupled version here because it is simpler, and because it allows to clearly separate the effects of random recombination from those of random reproduction. This version is also used elsewhere \cite{PHW06}, on the argument that recombination events are rare.

For a subset $G\subset S$ we define $X_G:=\btimes_{i\in G}X_i$ and  the mapping $\pi^{}_G:X\rightarrow X_G$ as the canonical projection.
Let $\omega$ be a (signed) measure on $X$. We define the pullback $\pi^{}_G.$ by $\pi^{}_G.\omega:=\omega\circ\pi^{-1}_G$. So, $\pi^{}_G.$ maps a measure on $X$ onto its corresponding marginal measure on $X_G$.

In the following, marginal processes of $Z_t$ will play a crucial role. The following proposition states that these are Markov chains, too. It is an extension of Lemma 1 in \cite{baakeherms}.
\begin{prop}
Let $I\subset S$, and let $\Zt$ be the recombination process as defined by equations \eqref{recombination}-\eqref{resampling}. Then $(\pi^{}_I.Z_t)^{}_{t\ge 0}$ is a Markov process with state space $E_I:=\{\omega\ \text{ \rm counting measure on }X_I$, $\omega(X_I)=N\}$.
\end{prop}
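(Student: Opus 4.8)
The plan is to verify the standard criterion for a function of a Markov process to be Markov (lumping / Dynkin's criterion). Write $\mathcal{L}$ for the generator of $\Zt$ read off from \eqref{recombination}--\eqref{resampling}, and let $\phi:=\pi^{}_I.$ denote the marginalisation map $E\to E_I$. It suffices to show that for every function $f$ on $E_I$ the quantity $\mathcal{L}(f\circ\phi)(z)$ depends on $z$ only through $\phi(z)=\pi^{}_I.z$. If this holds, there is an operator $\mathcal{G}$ on functions over $E_I$ with $\mathcal{L}(f\circ\phi)=(\mathcal{G}f)\circ\phi$, and $(\pi^{}_I.Z_t)^{}_{t\ge 0}$ is then the Markov process generated by $\mathcal{G}$. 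Since $E$ and $E_I$ are finite and every transition conserves the total mass $N$, one has $(\pi^{}_I.Z_t)(X_I)=Z_t(X)=N$ throughout, so there are no domain issues and the state space is indeed $E_I$.

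The geometric heart is the commutation identity $\pi^{}_I(p^{}_G(x,y))=p^{I}_{G\cap I}(\pi^{}_I(x),\pi^{}_I(y))$, where $p^{I}_H$ denotes the recombination map on $X_I$ associated with $H\subset I$ (complement taken within $I$). This holds because, by \eqref{pG}, the allele of $p^{}_G(x,y)$ at a site $i\in I$ equals $x^{}_i$ if $i\in G$ and $y^{}_i$ if $i\in\bar G$, i.e.\ it is governed solely by whether $i$ lies in $G\cap I$. Consequently the projected change vector $\pi^{}_I.\v$ of a full recombination event is exactly the marginal recombination change vector associated with $H=G\cap I$ and the marginal types $\pi^{}_I(x),\pi^{}_I(y)$, and in particular depends on $(G,x,y)$ only through these data. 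For mutation, a site $i\notin I$ leaves $\pi^{}_I.z$ unchanged, so its increment in $f\circ\phi$ vanishes identically; a site $i\in I$ induces the corresponding marginal mutation at a rate depending only on the allele at $i$, which is read off from $\pi^{}_I(x)$. Resampling commutes with $\pi^{}_I$ directly, its marginal change being $\delta^{}_{\pi^{}_I(x)}-\delta^{}_{\pi^{}_I(y)}$.

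It then remains to check that the rates aggregate correctly. Summing the contribution of each event type over the fibres of $\pi^{}_I$ and using that the recombination and resampling rates are bilinear in $z(x),z(y)$, the fibre sums factor: $\sum_{x:\,\pi^{}_I(x)=u}z(x)=(\pi^{}_I.z)(u)$, so that $\sum z(x)z(y)$ over the relevant fibres equals $(\pi^{}_I.z)(u)(\pi^{}_I.z)(w)$. For recombination one moreover collects all $G$ sharing a common trace $G\cap I=H$, which produces the effective marginal rate $\varrho^{I}_H:=\sum_{G\subset S:\,G\cap I=H}\varrho^{}_G$. Hence each of the three contributions to $\mathcal{L}(f\circ\phi)(z)$ is a function of $\pi^{}_I.z$ alone, which is what was required. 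The main obstacle, and the only point where the recombination structure is genuinely used, is this pair of facts: the commutation identity above, and the resulting fibre-sum factorisation that lets distinct full-type events collapse onto a single marginal event with an additive effective rate. Once both are in place, the marginal transition rates close on $E_I$ and define the generator $\mathcal{G}$.
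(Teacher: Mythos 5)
Your proposal is correct and follows essentially the same route as the paper: both verify the lumping criterion by showing that each induced transition of $\pi^{}_I.Z_t$ has a rate depending only on the marginal state, using the identity $\pi^{}_I(p^{}_G(x,y))=p^{I}_{G\cap I}(\pi^{}_I(x),\pi^{}_I(y))$, the fibre-sum factorisation $\sum_{x:\pi^{}_I(x)=x^{}_I}z(x)=(\pi^{}_I.z)(x^{}_I)$, and the aggregated rates $\varrho^{(I)}_H=\sum_{G:G\cap I=H}\varrho^{}_G$. Your generator phrasing $\mathcal{L}(f\circ\phi)=(\mathcal{G}f)\circ\phi$ is just a slightly more formal packaging of the paper's direct rate computation.
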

\begin{proof}
Obviously,  $(\pi^{}_I.Z_t)_{t\ge0}$ is a stochastic process on $E_I$.

We must show that the transition rates of $(\pi^{}_I.Z_t)_{t\ge 0}$ only depend on the current state of the process. A recombination event induces the following transition:
\[\pi^{}_I.z\rightarrow \pi^{}_I.(z+\v),\]
with
\begin{equation}\label{induziert}
 \pi^{}_I.\v=\delta_{\pi^{}_I(p^{}_G(x,y))}+ \delta_{\pi^{}_I(p^{}_{\bar G}(x,y))} -\delta_{\pi^{}_I(x)}-\delta_{\pi^{}_I(y)}
\end{equation}
and $\pi^{}_I(p^{}_G(x,y))=\ : \Big(\btimes_{i\in G\cap I}\{x^{}_i\}\Big) \times \Big(\btimes_{i\in \bar G\cap I}\{y^{}_i\}\Big) \ : $ in line with \eqref{pG}.

Consider now any nonzero jump. If it comes from a recombination event, it must be of the form \eqref{induziert}. That means there are types $x^{}_I, y^{}_I\in X_I$ and a subset $H$ of $I$ such that  $(\pi^{}_I.z)(x^{}_I)$ and $(\pi^{}_I.z)(y^{}_I)$ both decrease by one and the frequencies of the marginal types arising in the recombination event corresponding to $H$  increase. The rate for this transition is then given by the sum of all transitions of the original process that induce this transition in the marginal process: 
\begin{equation}\label{marg_raten}
\begin{split}
\sum\limits_{\substack{G\subset S:\\ G\cap I=H}}\sum\limits_{\substack{x\in X:\\ \pi^{}_I(x)=x^{}_I}}\sum\limits_{\substack{y\in X:\\ \pi^{}_I(y)=y^{}_I}}\frac{\varrho^{}_G}{N}z(x)z(y)
&=\sum_{\substack{G\subset S:\\ G\cap I=H}}\frac{\varrho^{}_G}{N}\big(\pi^{}_I.z\big)\big(x^{}_I\big)\cdot\big(\pi^{}_I.z\big)\big(y^{}_I\big)\\
&=\frac{\varrho^{(I)}_{H}}{N}\big(\pi^{}_I.z\big)(x^{}_I)\cdot\big(\pi^{}_I.z\big)(y^{}_I),
\end{split} 
\end{equation}
with $\varrho^{(I)}_{H}:=\sum\limits_{G\subset S: G\cap I=H}\varrho^{}_G$. So, this last term  depends only on the current state of the marginal process $(\pi^{}_I.Z_t)_{t\ge 0}$.

A mutation event of an individual of type $x$ at site $i$ from allele $x^{}_i$ to allele $y^{}_i$ induces the following transition of $\pi^{}_I.Z_t$:
\[ \pi^{}_I.z\rightarrow \pi^{}_I.z + \pi^{}_I.\delta^{}_{(\dots,y^{}_i,\dots)}-\pi^{}_I.\delta^{}_{(\dots,x^{}_i,\dots)}.\]
This jump is zero if $i\notin I$. Obviously, the transition rate is $\mu^i_{x^{}_iy^{}_i}(\pi^{}_I.z)(\pi^{}_I(x))$ and depends merely on the current state of $\pi^{}_I.Z_t$, too. The case of resampling is treated analogously. 
\end{proof}

This proof is an example for the so-called lumping procedure for Markov chains, compare \cite{burke, kemeney} for the general context or \cite{baakeetal} for the sequence context considered here.

\begin{remark}\label{subsyst}
A comparison between \eqref{recombination} and \eqref{marg_raten} shows that the marginal process $(\pi^{}_I.Z_t)_{t\ge 0}$ can itself be considered as a recombination process on the sites $I$. So, assertions about $Z_t$ will also hold for all derived marginal processes.
\end{remark}

\section{Recombination alone}
In this Section we  restrict ourselves to the case \emph{without mutation and resampling}, that means with $\mu^i_{x_iy_i}=b=0$ for all $i\in S$ and $x^{}_i,y^{}_i\in X_i$.

Since  $\v=0$ for some $x,y \in X$, there are `empty' recombination events at positive rate, but including these redundancies makes the rates in \eqref{recombination} so simple. The rates become considerably more complicated if only  `true jumps' are considered. This is already visible in the projection onto a single type. Let $x\in X$ and $Z_t=z$. In order to figure out the rate for the transition $z(x)\rightarrow z(x)+1$, we first determine the set of all pairs of types $\tilde x,\tilde y\in X$ such that for a given $G\subset S$ the jump $v^{}_{G,\tilde x,\tilde y}(x)$ equals $1$:
\begin{equation*}\begin{split}
\{\{\tilde x,\tilde y\}\subset X:v^{}_{G,\tilde x,\tilde y}(x)=1\}
&=\{\{\tilde x,\tilde y\}\subset X: \pi^{}_{G}(\tilde x)=\pi^{}_G(x), \pi^{}_{\bar G}(\tilde y)=\pi^{}_{\bar G}(x), \tilde x\neq x,\tilde y\neq x\}\\
&=\{\{\tilde x,\tilde y\}\subset X: \tilde x\in \pi^{-1}_{G}\big(\pi^{}_G(x)\big)\setminus\{x\},\tilde y\in \pi^{-1}_{\bar G}\big(\pi^{}_{\bar G}(x)\big)\setminus\{x\}\}.
\end{split}\end{equation*}
This leads to the transition rate
\begin{equation}\label{transrate1}
\sum_{G\subset S} \frac{\varrho^{}_G}{2N}\big[\big(\pi^{}_G.z\big)\big(\pi^{}_G(x)\big)-z(x)\big]\big[\big(\pi^{}_{\bar G}.z\big)\big(\pi^{}_{\bar G}(x)\big)-z(x)\big].
\end{equation}
The transition rate for $z(x)\rightarrow z(x)-1$ can be figured out analogously; $v^{}_{G,\tilde x,\tilde y}(x)=-1$ iff $\tilde x=x$ and $\tilde y$ is any type which is neither in $\pi^{-1}_G\big(\pi^{}_G(x)\big)$ nor in $\pi^{-1}_{\bar G}\big(\pi^{}_{\bar G}(x)\big)$. Since $\pi^{-1}_G\big(\pi^{}_G(x)\big)\cap\pi^{-1}_{\bar G}\big(\pi^{}_{\bar G}(x)\big)=\{x\}$ one has the rate
\begin{equation}\label{transrate2}
\sum_{G\subset S}\frac{\varrho^{}_G}{2N}z(x)\big[N-\big(\pi^{}_G.z\big)\big(\pi^{}_G(x)\big)-\big(\pi^{}_{\bar G}.z\big)\big(\pi^{}_{\bar G}(x)\big)+z(x)\big].
\end{equation}
Our aim is now to reformulate the process with the help of  additional random variables, so that the transition rates become simpler, in particular, unaffected by empty events. To this end, we define two new counting measures derived from $\Zt$, namely $(U_t)^{}_{t\ge 0}$ by $U_0(x)=0$ and 
\begin{equation*}
 U_t(x)=\text{number of events at which}\ x \text{-individuals are created until time}\ t
\end{equation*}
and $(V_t)^{}_{t\ge 0}$ by  $V_0(x)=0$ and
\begin{equation*}
 V_t(x)=\text{number of events at which}\ x \text{-individuals are broken up until time}\ t.
\end{equation*}
These processes also count events at which $Z_t$ does not change, namely the case that individuals of type $x$ are created and broken up at the same time. This may happen when an individual of type $x$ recombines according to $G$ with an individual of type $y$ with $\pi^{}_G(y)=\pi^{}_G(x)$.  Whenever this occurs, both counters increase but their difference remains unchanged. Altogether, we thus have
\begin{equation}\label{ZinUV}
Z_t=Z_0+U_t-V_t
\end{equation}
with the transition rates of $U_t$ and $V_t$ unaffected by `empty' events: For $U_t(x)=u$,  $u\rightarrow u+1$ happens at rate
\[ \sum_{G\subset S} \frac{\varrho^{}_G}{2N}\big(\pi^{}_G.z\big)\big(\pi^{}_G(x)\big)\cdot\big(\pi^{}_{\bar G}.z\big)\big(\pi^{}_{\bar G}(x)\big),\]
and for $V_t(x)=v$, the transition $v\rightarrow v+1$ happens at rate
\[\sum_{G\subset S}\frac{\varrho^{}_G}{2}z(x).\]

In the following, marginal processes will emerge frequently. We introduce a short-hand, symbolic notation similar to the one described in \cite{reco}. Fix an arbitrary $x\in X$ and define for a subset $G=\{g^{}_1,\dots,g^{}_{|G|}\}$ of sites
\begin{equation*}
[G]_t:=[g^{}_1,\dots,g^{}_{|G|}]_t:=\big(\pi^{}_G.Z_t\big)\big(\pi^{}_G(x)\big).
\end{equation*}
$[G]_t$ is the number of individuals that are identical to $x$ at the sites corresponding to $G$, at time $t$. Again, we use shorthands $[g^{}_1,\dots,g^{}_{|G|}]_t$ instead of $[\{g^{}_1,\dots,g^{}_{|G|}\}]_t$. Note that we suppress the dependence on $x$ in $[G]_t$ for ease of notation. Analogously, we define for the processes $U_t$ and $V_t$:
\begin{equation*}
 \langle G \rangle_t:=\big(\pi^{}_G.U_t\big)\big(\pi^{}_G(x)\big),
\end{equation*}

\begin{equation*}
(G)_t:=\big(\pi^{}_G.V_t\big)\big(\pi^{}_G(x)\big).
\end{equation*}

By Remark \ref{subsyst}, we can now consider $[G]_t$ as a recombination process on $|G|$ sites evaluated at the type $(x^{}_{g^{}_1},\dots,x^{}_{g^{}_{|G|}})$. 

For $|G|=2$, the distribution of $\langle g^{}_1,g^{}_2\rangle_t$ can be given explicitly because the transition rates
\begin{equation}\label{delta}
 \sum_{H\subset S: |H\cap G|=1}\frac{\varrho^{}_H}{2N}[g^{}_1]_t[g^{}_2]_t=:a  
\end{equation}
are constant in time because all 1-site marginals are constant in time. So, $\langle g^{}_1,g^{}_2\rangle_t$ follows a Poisson distribution with parameter $at$.

\subsection{Analysis of the expectation}
Since we will use it frequently, we want to recall an elementary fact concerning the dynamics of the mean of a continuous-time Markov chain with a finite state space, which is often used implicitly. The proof is a straightforward exercise that can be found  in \cite[Fact 1]{baakeherms}, for example. 

\begin{lemma}\label{erwabl2}
Let $(Z_t)_{t\ge 0}$ be a Markov process with finite state space $E \subset \mathbb{Z}^d$ with transition rates $q(z,z+v)$ for transitions from $z$ to $z+v$ for $z\in E$, $v \neq 0$ (let $q(z, z+v)=0$ if $z+v \notin E$). Then the following equation holds for all $t\ge 0$
\[ \frac {d}{dt} \EE(Z_t)=\EE(F(Z_t)), \]
where $F$ is defined as
\[ F(z):= \sum_{v\in \mathbb{Z}^d} vq(z,z+v). \]\qed
\end{lemma}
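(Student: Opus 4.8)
The plan is to reduce the statement to the standard relation between the time-derivative of an expectation and the infinitesimal generator of the chain, and then to verify by a short computation that applying the generator to the coordinate (identity) function reproduces exactly $F$. Concretely, let $\mathcal{L}$ denote the generator, acting on any $f:E\to\mathbb{R}$ by $(\mathcal{L}f)(z)=\sum_{v\neq 0}q(z,z+v)\bigl[f(z+v)-f(z)\bigr]$. Taking $f$ to be each of the $d$ coordinate functions $z\mapsto z^{(j)}$ simultaneously, that is $f=\mathrm{id}$, one reads off $(\mathcal{L}\,\mathrm{id})(z)=\sum_{v\neq 0}q(z,z+v)\,v=F(z)$, since the $v=0$ term drops out. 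Thus the assertion amounts to the identity $\frac{d}{dt}\EE(Z_t)=\EE\bigl((\mathcal{L}\,\mathrm{id})(Z_t)\bigr)$, read componentwise.

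For a self-contained argument I would derive this identity directly from the Kolmogorov forward (master) equation rather than invoke Dynkin's formula. Writing $p_t(z):=\Pr(Z_t=z)$, the forward equation reads $\dot p_t(z)=\sum_{v\neq 0}\bigl[q(z-v,z)\,p_t(z-v)-q(z,z+v)\,p_t(z)\bigr]$, where the first term collects the probability flux into $z$ and the second the flux out of $z$. I would then differentiate $\EE(Z_t)=\sum_{z\in E}z\,p_t(z)$ term by term and substitute this expression for $\dot p_t(z)$.

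The key manipulation is an index shift: in the gain term I would replace the summation variable $z$ by $z+v$ (equivalently set $z':=z-v$), so that $\sum_z\sum_v z\,q(z-v,z)p_t(z-v)=\sum_{z}\sum_v(z+v)\,q(z,z+v)\,p_t(z)$. Subtracting the loss term $\sum_z\sum_v z\,q(z,z+v)p_t(z)$ then cancels all $z$-contributions and leaves $\sum_z\sum_v v\,q(z,z+v)p_t(z)=\sum_z p_t(z)\,F(z)=\EE(F(Z_t))$, as desired.

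I do not expect a genuine obstacle here: because $E$ is finite, every sum is finite, each $p_t(z)$ is differentiable in $t$, and all interchanges of summation with differentiation are automatically justified. The only point that demands a little care is the bookkeeping in the index shift of the gain term, together with the observation that the $v=0$ contributions are harmless, since they cancel in the master equation and contribute $0$ to $F$.
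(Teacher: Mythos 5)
Your proposal is correct. Note that the paper itself gives no proof of this lemma: it declares it ``a straightforward exercise'' and defers to \cite[Fact 1]{baakeherms}, so there is no argument in the text to compare against line by line. Your derivation --- identify $F$ as the generator applied to the identity, then verify $\frac{d}{dt}\EE(Z_t)=\EE(F(Z_t))$ componentwise from the Kolmogorov forward equation via the index shift $z\mapsto z+v$ in the gain term --- is exactly the standard way of carrying out that exercise, and every interchange of sum and derivative is justified by finiteness of $E$. The one point worth making explicit is that the index shift tacitly extends the sums over $z$ to all of $\mathbb{Z}^d$; this is harmless precisely because of the stated convention $q(z,z+v)=0$ for $z+v\notin E$ together with $p_t(z)=0$ for $z\notin E$, so no probability flux enters or leaves through states outside $E$. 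With that remark added, your argument is a complete, self-contained substitute for the citation.
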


Lemma \ref{erwabl2} together with the representation of $Z_t$ in \eqref{ZinUV} gives us the dynamics of the mean:
\begin{equation}\label{erwabl}
\frac{d}{dt}\EE\Big[[1,\dots,n]_t\Big]=\sum\limits_{G \subset S }\EE\Big[\frac{\varrho^{}_G}{2N}([G]_t[\bar G]_t-N\cdot[1,\dots,n]_t)\Big].
\end{equation}

The motivation for this comes from the well-understood special case of single crossovers \cite{baakeherms}. Here, all recombination rates that are attached to multiple crossover recombination events vanish. This affects all $\varrho^{}_G$ with $G$ that either do not contain  $1$ or $n$, or have gaps.

In this case, the induced marginal processes are conditionally independent of each other and so moment closure is immediate \cite[Lemma 1 and Theorem 1]{baakeherms}:

\[\begin{split}
\frac{d}{dt}&\EE\Big[[1,\dots,n]_t\Big]=\sum\limits_{G \subset S }\EE\Big[\frac{\varrho^{}_G}{2N}([G]_t[\bar G]_t-N\cdot [1,\dots,n]_t)\Big]\\
=& \sum\limits_{G \subset S}\frac{\varrho^{}_G}{2N}(\EE\Big[[G]_t\Big]\EE\Big[[\bar G]_t\Big]-N\cdot\EE\Big[[1,\dots,n]_t\Big]).
\end{split}
\]
We obtain a finite nonlinear system of differential equations, whose solution is known in closed form \cite{reco}.

The independence relies on two properties. First, a single crossover recombination event induces a pair of marginal processes $[G]_t, [\bar G]_t$ for which $\{G,\bar G\}$ is an \textit{ordered} partition of $S$. Second, a single crossover recombination event only affects one of the induced processes while leaving the other one constant.

With general recombination both these properties are violated. First, marginal processes arise that are given by \textit{non-ordered} partitions, so even single-crossover recombination events may affect both processes at the same instant. Second, a multiple-crossover recombination event may affect the frequency of a pair of marginals that are given by an ordered partition. So, the independence of the induced marginal processes is violated in two ways.

Let us now look at \eqref{erwabl} again. On the right-hand side, an expectation of products emerges. This is what one may expect due to the inherent nonlinearity of the recombination process. Nevertheless, we see that no site arises more than once, so the arising products are described by a partition of sites. This leads us to the following question: Given an arbitrary partition of sites, what is the dynamics of the mean of the product of the induced marginal processes? Theorem \ref{zentral} below answers this. For its formulation we need the following definition.

\begin{defin}
Let $\{A_j\}_{j\in J}$ be a collection of sets with $A_i\cap A_j=\varnothing, \ i\neq j$. Define $A_J:=\bigcup\limits_{j\in J}A_j$. 
Then, $G\subset A_J$ \textit{disrupts} $\{A_j\}_{j\in J}$, denoted by $G|\{A_j\}_{j\in J}$, if
$G\cap A_j\notin \{\varnothing, G\}$ for all $j\in J$.  For $|J|=1$, we simply write $G|A_j$.
\end{defin}
Note that for a collection of pairwise disjoint subsets of sites $\{A_j\}_{j\in J}$ disrupted by $G$, in a recombination event corresponding to $G$ between individuals of marginal types  $\pi^{}_G(x)$ and $\pi^{}_{A_J\setminus G}(x)$, the processes $\langle A_j\rangle_t,\ j\in J$  increase. 
Similarly, in the recombination event corresponding to $G$  between individuals of marginal types $\pi^{}_{A_K}(x)$ and $\pi^{}_{A_J\setminus A_K}(x)$ for $K\subset J$, the processes $(A_j)_t, \ j\in J$ increase.

With these preparations, we are now ready to state
\begin{theorem}\label{zentral}

Let $m\le n$, $M:=\{1,\dots,m\}$ and $\mathcal A:=\{A_1,\dots,A_m\}$ be a partition of $\{1,\dots,n\}$. Define $\mathcal P$ as the set of all triples $(I,J,K)$, where $\{I,J,K\}$ is a partition of $M$. Then
 \begin{equation}\label{zentral_gl}
\begin{split}
\frac{d}{dt}\EE\Big[\prod\limits_{\ell\in M}[A_\ell]_t\Big]
=\EE\Big[\sum\limits_{\substack{(I,J,K)\in \mathcal P\\ I\neq M}}\prod\limits_{i\in I}[A_i]_t
\sum\limits_{\tilde K\subset K}\sum\limits_{\substack{G\subset  A_J\\ G|\{A_j\}_{j\in J}}}\frac{\varrho^{I}_{K, G}}{4N}(-1)^{|K|}[A_{\tilde K} \cup G]_t[A_{K\setminus \tilde K}\cup G^c]_t \Big],
\end{split}
\end{equation}
where $G^c$ is  the complement of $G$ in $A_J$ and $\varrho^{I}_{K,G}$ is defined as 
\begin{equation}\label{rhos}
\varrho^{I}_{K,G}:= \sum\limits_{D \subset A_I}\sum\limits_{\substack{H\subset A_K\\ H|\{A_k\}_{k\in K}}}\varrho^{}_{H\cup D \cup G}.
\end{equation}

\end{theorem}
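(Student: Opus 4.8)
The plan is to read off the dynamics from the generator. I would apply Lemma~\ref{erwabl2}, in the form valid for an arbitrary function $g$ of the state (the computation in \cite[Fact~1]{baakeherms} is verbatim the same), to $g(z):=\prod_{\ell\in M}[A_\ell]$, where $[A_\ell]=(\pi^{}_{A_\ell}.z)(\pi^{}_{A_\ell}(x))$ for the fixed reference type $x$. Since mutation and resampling are switched off, this gives $\frac{d}{dt}\EE\big[\prod_{\ell\in M}[A_\ell]_t\big]=\EE\big[(\mathcal Lg)(Z_t)\big]$, where the recombination generator assembled from the elementary events is $(\mathcal Lg)(z)=\sum_{G\subset S}\sum_{u,w\in X}\frac{\varrho^{}_G}{4N}z(u)z(w)\big(g(z+v^{}_{G,u,w})-g(z)\big)$, the prefactor $\tfrac1{4N}$ being the per-pair recombination rate. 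So everything reduces to computing $(\mathcal Lg)(z)$.

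For each block write $\Delta^{}_\ell:=(\pi^{}_{A_\ell}.v^{}_{G,u,w})(\pi^{}_{A_\ell}(x))$ for the increment of the $\ell$-th factor, read off from \eqref{induziert}. I would first expand the product increment multilinearly,
\[ \prod_{\ell\in M}\big([A_\ell]+\Delta^{}_\ell\big)-\prod_{\ell\in M}[A_\ell]=\sum_{\varnothing\neq L\subseteq M}\Big(\prod_{\ell\in L}\Delta^{}_\ell\Big)\Big(\prod_{\ell\notin L}[A_\ell]\Big), \]
so that the untouched factors $I:=M\setminus L$ produce the prefactor $\prod_{i\in I}[A_i]_t$ in \eqref{zentral_gl}. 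The key elementary computation is the increment itself: writing $G^{}_\ell:=G\cap A_\ell$ and letting $a^{}_\ell,b^{}_\ell$ (resp.\ $c^{}_\ell,d^{}_\ell$) indicate agreement of $u$ (resp.\ $w$) with $x$ on $G^{}_\ell$ and on $A_\ell\setminus G^{}_\ell$, one finds
\[ \Delta^{}_\ell=a^{}_\ell d^{}_\ell+c^{}_\ell b^{}_\ell-a^{}_\ell b^{}_\ell-c^{}_\ell d^{}_\ell=(c^{}_\ell-a^{}_\ell)(b^{}_\ell-d^{}_\ell). \]
This factorised form shows at once that $\Delta^{}_\ell=0$ whenever $G\cap A_\ell\in\{\varnothing,A_\ell\}$, so that only blocks disrupted by $G$ can lie in $L$.

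Next I would aggregate over the recombining pair. Expanding $\prod_{\ell\in L}\Delta^{}_\ell$ into the four monomials per block and summing $z(u)z(w)$ over all $u,w$, the $u$- and $w$-indicators separate, each group collapsing to a single marginal count by Remark~\ref{subsyst}. For every block in $L$ exactly one of four choices is made: the two mixed monomials $a^{}_\ell d^{}_\ell,c^{}_\ell b^{}_\ell$ (sign $+$) split $A_\ell$ into $G^{}_\ell$ on one marginal and $A_\ell\setminus G^{}_\ell$ on the other, and across such blocks these parts assemble into a disrupting set $G\subset A_J$ and its complement $G^c$; the two pure monomials $a^{}_\ell b^{}_\ell,c^{}_\ell d^{}_\ell$ (sign $-$) place all of $A_\ell$ on a single marginal, the side being recorded by $\tilde K\subset K$ and each contributing a factor $-1$, whence the global sign $(-1)^{|K|}$. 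This produces precisely the factor $[A_{\tilde K}\cup G]_t[A_{K\setminus\tilde K}\cup G^c]_t$ and splits $L=J\cup K$ into the disrupted blocks that are cut ($J$) and those transferred wholesale ($K$).

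It then remains to collect the rates. A block in $I$ imposes no constraint on the recombination pattern, contributing an arbitrary part $D\subset A_I$; a block in $K$ must itself be disrupted for its increment not to vanish, contributing a disrupting part $H|\{A_k\}_{k\in K}$; and the $J$-blocks contribute $G$. Summing $\varrho^{}_{H\cup D\cup G}$ over all admissible $D$ and $H$ yields exactly $\varrho^{I}_{K,G}$ of \eqref{rhos}, completing \eqref{zentral_gl}. The step I expect to be the real obstacle is this final bookkeeping: matching each monomial of the multilinear expansion to a triple $(I,J,K)\in\mathcal P$ together with the data $(\tilde K,G)$ without double counting, and correctly tracking the signs and the symmetry factors coming from the $u\leftrightarrow w$ and $G\leftrightarrow\bar G$ redundancies of the elementary events, so that the constant emerges as the stated $\tfrac1{4N}$. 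A guiding consistency check throughout is the case $m=1$, which must reproduce the mean equation \eqref{erwabl}.
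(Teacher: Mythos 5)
Your setup is sound and, despite the different packaging, it is essentially the paper's own argument: applying the (generator form of) Lemma~\ref{erwabl2} to $g(z)=\prod_{\ell}[A_\ell]$ and expanding multilinearly over $L\subseteq M$ reproduces the paper's expansion of $\prod_{\ell}\big([A_\ell]_t+\langle A_\ell\rangle^t_{\delta t}-(A_\ell)^t_{\delta t}\big)$, because your two mixed monomials $a_\ell d_\ell+c_\ell b_\ell$ are exactly the increment of $\langle A_\ell\rangle$ and your two pure monomials $a_\ell b_\ell+c_\ell d_\ell$ exactly the increment of $(A_\ell)$. The factorisation $\Delta_\ell=(c_\ell-a_\ell)(b_\ell-d_\ell)$, the resulting restriction to blocks disrupted by the recombination set, and the identification of $I$, $J$, $K$ with untouched/mixed/pure blocks are all correct and match Remark~\ref{Bedeutung_IJK}.

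However, the proposal stops exactly where the content of the theorem lies, and the step you defer is not routine. For each block $j\in J$ there are \emph{two} surviving monomials ($a_jd_j$ and $c_jb_j$), i.e.\ a choice of which of the two recombination products carries the $x$-alleles on $A_j$; this produces a sum over a subset $\tilde J\subseteq J$ exactly parallel to the sum over $\tilde K\subseteq K$ that you do retain. Your rate collection (``the $J$-blocks contribute $G$'') accounts only for the choice in which the first marginal picks up $G_{\mathrm{orig}}\cap A_j$ for every $j\in J$; the other $2^{|J|}-1$ choices yield the same product $[A_{\tilde K}\cup G]_t[A_{K\setminus\tilde K}\cup G^c]_t$ but come from recombination sets obtained by complementing $G_{\mathrm{orig}}$ inside some of the $A_j$, so the coefficient of that product is $\tfrac{1}{4N}\sum_{\tilde J\subseteq J}\varrho^{I}_{K,\sigma^{}_{\tilde J}(G)}$, where $\sigma^{}_{\tilde J}$ complements $G$ within the blocks $A_j$, $j\in J\setminus\tilde J$, and not simply $\varrho^{I}_{K,G}/(4N)$. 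Folding this extra sum into the $G$-sum, using $\varrho^{}_H=\varrho^{}_{\bar H}$, is precisely the delicate reindexing you flag but do not perform, and your own proposed check $m=1$ is where it bites: the full computation gives $\sum_{G}\tfrac{\varrho^{}_G}{2N}\big([G]_t[\bar G]_t-N[1,\dots,n]_t\big)$, i.e.\ \eqref{erwabl}, with the $\tfrac{1}{2N}$ in front of $[G]_t[\bar G]_t$ arising exactly from the two mixed monomials, whereas reading a single term $\varrho^{\varnothing}_{\varnothing,G}/(4N)$ off your final paragraph yields only half of it. Until this sum over $\tilde J$ is carried out and reconciled with the stated coefficient $\varrho^{I}_{K,G}/(4N)$, the argument is incomplete at its decisive step.
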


\begin{remark}\label{Bedeutung_IJK}
The right-hand side of \eqref{zentral_gl} may be read in the following way. The set $I$ indicates the parts of $\mathcal A$ that remain unchanged under the corresponding recombination event, the sets $J$ and $K$ indicate sets for which the  derived processes $U_t$ and $V_t$, respectively, increase. So the splitting of $Z_t$ into $U_t$ and $V_t$ does not only simplify the calculation but also shows up in the result.
\end{remark}

\begin{proof}[Proof of Theorem \ref{zentral}]
For $\delta t>0$, define 
\[\langle A_\ell\rangle^t_{\delta t}:=\langle A_\ell\rangle_{t+\delta t}-\langle A_\ell\rangle_t\]
and
\[(A_\ell)^t_{\delta t}:=(A_\ell)_{t+\delta t}-(A_\ell)_t\]
then
\[[A_\ell]_{t+\delta t}=[A_\ell]_t+\langle A_\ell\rangle^t_{\delta t}-(A_\ell)^t_{\delta t}\]

 and $\prod_{\ell\in M}[A_\ell]_{t+\delta t}$ reads
\[\prod_{\ell\in M}[A_\ell]_{t+\delta t}=\sum_{(I,J,K)\in\mathcal P}(-1)^{|K|}\prod_{i\in I}[A_i]_t\prod_{j\in J}\langle A_j\rangle^t_{\delta t}\prod_{k\in K}(A_k)^t_{\delta t}.  \]
Let $t + \delta t$ be the time of the first recombination event after time $t$.\\
Then, a summand $\prod_{i\in I}[A_i]_t\prod_{j\in J}\langle A_j\rangle^t_{\delta t}\prod_{k\in K}(A_k)^t_{\delta t}$ may evaluate to:
\begin{itemize}
\item zero if there is any $j\in J$ or $k\in K$ such that $\langle A_j\rangle^t_{\delta t}=0$ or $(A_k)^t_{\delta t}=0$ 
\item $(-1)^{|K|}\prod_{i\in I}[A_i]_t$ otherwise, that means if $\langle A_j\rangle^t_{\delta t}=(A_k)^t_{\delta t}=1$ for all $j\in J$, $k\in K$.
\end{itemize}
The latter transition comes from recombination events that correspond to the union of some $G$ disrupting $\{A_j\}_{j\in J}$ and $H$ disrupting $\{A_k\}_{k\in K}$ and any subset $D$ of $A_I$. 
At such recombination events, the recombining individuals must be of the following form: $x$-alleles at $G$, $G^c$ resp., $x$-alleles at $A_k, \ k\in K$, whereas the particular $A_k$ may be arbitrarily distributed  across the two individuals (but the individual sets may not be disrupted!). Thus, the complete rate reads
\begin{equation}
r(I, J, K):=\sum\limits_{\tilde K\subset K}\sum\limits_{\substack{G\subset  A_J\\ G|\{A_j\}_{j\in J}}}\frac{\varrho^{I}_{K, G}}{4N}[A_{\tilde K} \cup G]_t[A_{K\setminus \tilde K}\cup G^c]_t, 
 \end{equation}
with $G^c$ and $\varrho^{I}_{K, G}$ as defined above. This is the rate of the event that the terms corresponding to $J$ and $K$ increase, that means it is the rate of all recombination events such that a binding arises in each $A_j,\ j\in J$, and a binding breaks in each $A_k,\ k\in K$. 

Thus,
\[\frac{d}{dt}\EE\Big[\prod\limits_{\ell\in M}[A_\ell]_t\Big]=\EE\Big[\sum\limits_{\substack{(I,J,K)\in \mathcal P\\ I\neq M}}(-1)^{|K|}\prod\limits_{i\in I}[A_i]_t r(I,J,K)\Big],\]
which is the assertion of the theorem. 

\end{proof}

Let us now consider the implication of the theorem for the moment-closure problem. The theorem tells us that
the dynamics of the mean of a product of marginal processes defined by a partition of sites  can be described by the mean of another product of marginal processes defined by a (finer) partition of sites. Since the number of sites is finite and so is the number of partitions of sites  the moment closure approach (for the mean) directly leads to a finite and linear system of ODE's. We have thus proved

\begin{corollary}
For the Moran model with recombination alone, the moment approach closes. \qed
\end{corollary}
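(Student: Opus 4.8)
The plan is to derive the corollary directly from Theorem~\ref{zentral} by tracking how the partition refines under the dynamics and arguing that the collection of relevant quantities is finite and closed under the operation described by \eqref{zentral_gl}. First I would observe that the object we ultimately care about, $\EE[[1,\dots,n]_t]$, is the expectation of a product over the trivial partition $\{\{1,\dots,n\}\}$ (or, after one application of \eqref{erwabl}, a sum of expectations of products $\EE[[G]_t[\bar G]_t]$). So the natural state space for the moment hierarchy is the set of expectations $\EE\big[\prod_{\ell\in M}[A_\ell]_t\big]$ indexed by all partitions $\mathcal A=\{A_1,\dots,A_m\}$ of $\{1,\dots,n\}$.

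Next I would invoke Theorem~\ref{zentral} to read off the crucial structural fact: the time derivative of the expectation attached to a partition $\mathcal A$ is a linear combination of expectations attached to \emph{other} partitions of $\{1,\dots,n\}$. Concretely, each summand on the right-hand side of \eqref{zentral_gl} has the form $\EE\big[\prod_{i\in I}[A_i]_t \cdot [A_{\tilde K}\cup G]_t\,[A_{K\setminus\tilde K}\cup G^c]_t\big]$ (with the $J$-blocks redistributed into the two new marginals), and the key point I would emphasize is that the sites appearing in these new marginals are exactly the sites of $A_J\cup A_K$, partitioned among the factors with no repetition. Thus each term is again the expectation of a product of marginals indexed by a genuine partition of a subset of $\{1,\dots,n\}$, which can be completed to a partition of the full site set. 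Hence the map "partition $\mapsto$ right-hand side of its ODE" sends elements of our index set back into (finite linear combinations over) the same index set.

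I would then assemble the closure argument. Let $\mathcal V$ denote the finite-dimensional real vector space spanned by the formal symbols indexed by all partitions of $\{1,\dots,n\}$; there are finitely many such partitions (the Bell number $B_n$ of them), so $\dim\mathcal V<\infty$. By the previous step, Theorem~\ref{zentral} defines a linear map $L:\mathcal V\to\mathcal V$ such that the vector $m(t)$ of expectations satisfies $\frac{d}{dt}m(t)=L\,m(t)$. This is a finite, linear, autonomous system of ODEs, and $\EE[[1,\dots,n]_t]$ is one of its coordinates. That is precisely the assertion that the moment approach closes, so the corollary follows. The one bookkeeping subtlety I would handle carefully is the reduction of each new marginal $[A_{\tilde K}\cup G]_t$ back to a canonical partition representative: because the same product of marginals can be generated by several distinct triples $(I,J,K)$ and subsets $\tilde K, G$, one must group like terms and verify that the coefficients $\varrho^{I}_{K,G}(-1)^{|K|}/(4N)$ accumulate into well-defined (possibly $N$-dependent but time-independent) constants. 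This is the main obstacle — not conceptual, but the verification that distinct recombination channels collapse consistently onto the finite set of partition symbols and yield a genuine constant-coefficient matrix $L$ rather than something that reintroduces hidden dependence on the current state.
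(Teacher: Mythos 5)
Your argument is essentially identical to the paper's: Theorem~\ref{zentral} expresses the derivative of each partition-indexed expectation as a linear combination of other partition-indexed expectations with constant coefficients, and since there are only finitely many partitions of the finite site set, one obtains a finite linear ODE system containing $\EE[[1,\dots,n]_t]$ as a coordinate. The bookkeeping point you raise (collecting like terms into constant coefficients) is harmless and implicit in the paper, so your proposal is correct and takes the same route.
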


The size of these systems explodes with the number of sites. Nevertheless, there is much redundancy in the concrete calculation of particular means. For example, in the analysis of $\EE\big[[1,2,3,4]_t\big]$, marginal processes on three sites emerge. According to Remark \ref{subsyst}, these can be treated as recombination processes on three sites, so by a proper summation of the recombination rates, one can easily determine their solutions given the solution of the three-sites recombination process.

\subsection{Comparison with the deterministic dynamics}
We now want to compare the result of Theorem \ref{zentral} to the corresponding deterministic dynamics. To this end, let $\cM(X)$ be the space of all measures on $X$. For $G\subset S$ define the recombinator $R_G$
\footnote{This is a generalisation of the recombinator in \cite{reco}. Note that the notational similarity is deceptive because $G$ denotes sites here rather than `links' (the bonds between sites) as in \cite{reco}.}
 by
\[
 R_G(\omega):=\frac{1}{|\omega|}(\pi^{}_G.\omega)\otimes(\pi^{}_{\bar G}.\omega)
\]
with $R_G(0)=0$. Consider the following dynamical system on $\cM(X)$:
\begin{equation}\label{detsyst}
 \dot{\omega}=\sum\limits_{G\subset S}\frac{\varrho^{}_G}{2}\big(R_G-\Id\big)\omega.
\end{equation}
This is the infinite population limit of the recombination process (without and  with resampling) in the following sense. If we consider $\hat Z^N_t:=\frac{1}{N}Z_t$ and let $\lim_{N\rightarrow\infty}Z^N_0=p^{}_0$, then 
\begin{equation}\label{grossezahl}
\lim_{N\rightarrow\infty}\sup_{s\le t}|\hat Z^N_s - p^{}_s|=0
\end{equation}
with probability $1$, where $p^{}_s$ is the solution of the initial value problem \eqref{detsyst} with $\omega^{}_0=p^{}_0$. This is shown in \cite{baakeherms} for the special case of single crossovers, but it is  obvious that the proof, which is based on the general law of large numbers by Ethier and Kurtz (\cite[Thm. 11.2.1]{ethier}, see also \cite{kurtz}), may be generalised to the case of multiple crossovers.

We are now interested in the relationship between \eqref{zentral_gl} and the deterministic dynamics. If $\omega_t$ follows \eqref{detsyst}, then a  (tensor) product of marginal measures $(\pi^{}_{A_1}.\omega_t)\otimes\dots\otimes(\pi^{}_{A_m}.\omega_t)$ given by a partition of sites as in Theorem \ref{zentral} exhibits the following dynamics:
\begin{equation}\label{detprod}
\begin{split}
\frac{d}{dt}\big((&\pi^{}_{A_1}.\omega_t)\otimes\dots\otimes (\pi^{}_{A_m}.\omega_t)\big)=(\pi^{}_{A_1}.(\sum\limits_{G\subset S} \frac{\varrho^{}_{G}}{2}(R_{G}-\Id))(\omega_t))\otimes (\pi^{}_{A_2}.\omega_t) \otimes \dots\otimes(\pi^{}_{A_m}.\omega_t)\\
&+(\pi^{}_{A_1}.\omega_t)\otimes(\pi^{}_{A_2}.(\sum\limits_{G\subset S}\frac{\varrho^{}_{G}}{2}(R_{G}-\Id))\omega_t)
\otimes(\pi^{}_{A_3}.\omega_t)\otimes\dots\otimes(\pi^{}_{A_m}.\omega_t)\\
&+\dots+(\pi^{}_{A_1}.\omega_t)\otimes\dots\otimes(\pi^{}_{A_{m-1}}.\omega_t)\otimes(\pi^{}_{A_m}.(\sum\limits_{G\subset S}\frac{\varrho^{}_{G}}{2}(R_{G}-\Id))\omega_t)\\
 &=\sum\limits^m_{j=1}\sum\limits_{B\subset A_j}\varrho_B(\pi^{}_{A_1}.\omega_t)\otimes
 \dots\otimes
 \big[\frac{1}{|\omega_t|}(\pi^{}_B.\omega_t)\otimes(\pi^{}_{A_j\setminus B}.\omega_t) - (\pi^{}_{A_j}.\omega_t) \big]
 \otimes\dots\otimes(\pi^{}_{A_m}.\omega_t),
\end{split}
\end{equation}
with $\varrho^{}_B:=\sum\limits_{\substack{H\subset S\\ H\cap A_j=B}}\varrho^{}_H$.

Compare this to \eqref{zentral_gl}, and only consider summands where $|J|=1$ and $|K|=0$ or $|J|=0$ and $|K|=1$. According to Remark \ref{Bedeutung_IJK}, we can understand the corresponding transitions as `uncorrelated' events at which only one marginal process changes at a given instant. We get the following terms on the right-hand side of \eqref{zentral_gl}: 
\begin{itemize}
 \item $J=\{j\}, K=\varnothing$: 
\begin{equation}\label{stern1} 
\EE\Big[\prod\limits_{i:i\neq j}[A_i]_t\sum\limits_{\substack{G\subset  A_j\\ G| A_j}} \frac{\varrho^{M\setminus\{j\}}_{\varnothing, G}}{4N}[G]_t[A_j\setminus G]_t \Big],
\end{equation}
\item $J=\varnothing, K=\{j\}$:
\begin{equation}\label{stern2}\EE\Big[\prod\limits_{i:i\neq j}[A_i]_t\frac{\varrho^{M\setminus\{j\}}_{\{j\}, \varnothing}}{4N}(-1)[A_j]_tN \Big], \end{equation}
\end{itemize}
with (cf. \eqref{rhos})
\[\varrho^{M\setminus\{j\}}_{\{j\}, \varnothing}=\sum\limits_{D \subset A_I}\sum\limits_{\substack{H\subset A_j\\ H| A_j}}\varrho^{}_{H\cup D}=\sum\limits_{\substack{H\subset A_j\\ H| A_j}}\sum\limits_{D \subset A_I}\varrho^{}_{H\cup D} =\sum\limits_{\substack{H\subset A_j\\ H| A_j}}
\varrho^{M\setminus\{j\}}_{\varnothing, H}.\]
Adding all terms of kind \eqref{stern1} and \eqref{stern2}, one obtains the analogue of the right-hand side of \eqref{detprod}.
According to Remark \ref{Bedeutung_IJK}, the summands with $|J\cup K|\ge 2$ correspond to \textquoteleft correlated\textquoteright  \ events at which two or more marginal processes change simultaneously.

We may thus conclude that the uncorrelated events correspond to the deterministic equation. We will now show  that the correlated events are of lower order and thus tend to zero in the limit $N\rightarrow \infty$.
To this end, look at \eqref{erwabl}. The right-hand side consists of the terms $\frac{1}{N}[G]_t[\bar G]_t$ and $[1,\dots,n]_t$. They are both of order $N$, since each individual term $[\dots]_t$ is of order $N$ (which follows, for example, from \eqref{grossezahl}). Let us look at the derivative of the mean of $\frac{1}{N}[G]_t[\bar G]_t$ (cf. \eqref{zentral_gl}). The terms with $|I|=1$ (those belonging to \textquoteleft uncorrelated\textquoteright\ events) will be of order  $N$ again, whereas the terms with $I=\varnothing$ are of order  $1$. By differentiating terms such as $\EE\Big[\frac{1}{N^2}[A_1]_t[A_2]_t[A_3]_t\Big]$ and beyond, the same observation applies: the order  of summands belonging to \textquoteleft correlated\textquoteright \ events is less or equal $1$, so for the relative frequencies ($[1,\dots,n]_t / N$) the dynamics of the mean tends to the dynamics of the deterministic model.

\subsection{Two sites, arbitrary moments}
In the case of two sites, the recombination process is rather simple. This mainly relies on the fact that the transition rate for $\langle 1,2\rangle_t$ is constant, as we have already seen in \eqref{delta}. Furthermore the set of partitions of two sites is trivial. In this special case we can easily show moment closure for arbitrary moments. The simplicity of the setting permits to look at $[1,2]_t$ itself without considering $\langle 1,2\rangle_t$ and $(1,2)_t$. The process $[1,2]^m_t$ has the following possible transitions (cf. \eqref{transrate1}, \eqref{transrate2}):
\[[1,2]^m_t\rightarrow ([1,2]_t +1)^m \quad\text{at rate}\quad \frac{\varrho^{}_1}{N}([1]_t-[1,2]_t)([2]_t-[1,2]_t)\]
and
\[[1,2]^m_t\rightarrow ([1,2]_t - 1)^m \quad\text{at rate}\quad \frac{\varrho^{}_1}{N}[1,2]_t(N-[1]_t-[2]_t +[1,2]_t).\]

Using the binomial theorem and eliminating empty transitions, we obtain for the $m$-th moment:
\begin{equation*}\begin{split}
\frac{d}{dt}\EE\Bigl[[1,2]^m_t\Bigr]=&\frac{\varrho^{}_1}{N�}\sum\limits^{m-1}_{k=0}\EE\Bigl[\binom{m}{k}[1,2]^k_t([1]_t-[1,2]_t)([2]_t-[1,2]_t)\Bigr]\\
&+\frac{\varrho^{}_1}{N�}\sum\limits^{m-1}_{k=0}(-1)^{m-k}\EE\Bigl[\binom{m}{k}[1,2]^k_t[1,2]_t(N-[1]_t-[2]_t+[1,2]_t)\Bigr]\\
=\sum\limits^{m-2}_{k=0}\EE\Bigl[&\frac{\varrho^{}_1}{N�}\binom{m}{k}[1,2]^k_t\{[1]_t[2]_t-2\delta^{(2)}_{m-k}[1,2]_tc+2\delta^{(2)}_{m-k+1}[1,2]^2_t+(-1)^{m-k}[1,2]_tN\}\Bigr]\\
&+\frac{\varrho^{}_1m}{N�}\EE\Bigl[[1,2]^{m-1}([1]_t[2]_t-[1,2]_tN)\Bigr],
\end{split} 
\end{equation*}
with $c:=([1]_t+[2]_t)$ and
\[\delta^{(2)}_{m-k}:=\begin{cases}
 1 &\text{if}\quad m-k\equiv 0 \ \text{mod} \ 2\\
 0 &\text{otherwise}.
         \end{cases}\]
So all emerging terms are moments of order $m$ or less.

\section{Recombination and Mutation}

We now want to add mutation to our process. Let us first look at the process with mutation alone, e.g. $b=\varrho^{}_G=0$. By Lemma \ref{erwabl2}, the derivative of the mean is:
\[\frac{d}{dt}\EE\Big[[1,\dots,n]_t\Big]=\sum\limits_{j\in S}\Big(\sum\limits_{y\in X_j}\mu^j_{yx^{}_j}[1,\dots,j-1,j+1,\dots,n]_t-\sum\limits_{y\in X_j\setminus\{x^{}_j\}}\mu^j_{x^{}_jy}[1,\dots,n]_t\Big).\]
So, it only consists of linear terms and marginal processes. When we consider a product of marginal processes given by a partition of sites as in the previous section, we have, due to the fact that mutation only acts on single sites independently of others:

\[\begin{split}
\frac{d}{dt}\EE\Big[\prod_{\ell\in M}[A_\ell]_t\Big]
&=\EE\Big[\sum\limits_{\ell\in M}\Big(\prod\limits_{i\in M\setminus \{\ell\}}[A_i]_t\sum\limits_{j\in A_\ell}\sum\limits_{y\in X_j}\mu^{j}_{yx^{}_j}[A_\ell\setminus \{j\}]_t \Big)\\
&-\sum\limits_{\ell\in M}\Big(\prod\limits_{i\in M\setminus \{\ell\}}[A_i]_t\sum\limits_{j\in A_\ell}\sum\limits_{y\in X_j\setminus\{x^{}_j\}}\mu^{j}_{x^{}_jy}[A_\ell]_t \Big) \Big].
\end{split}\]
What happens when we add recombination? Let $F_M$ and $F_R$, respectively, be the \textquoteleft mean rate of change functions\textquoteright\ from Lemma \ref{erwabl2} for $\prod_{\ell\in M}[A_\ell]_t$ from the process with solely mutation and recombination, respectively. Since mutation and recombination proceed independently, the respective function $F_{RM}$ for the recombination-mutation process is then just  $F_R+F_M$ and according to Lemma \ref{erwabl2} we have:
\[\begin{split}
\frac{d}{dt}\EE\Big[\prod_{\ell\in M}[A_\ell]_t\Big]&=\EE\Big[\sum\limits_{\substack{(I,J,K)\in \mathcal P \\ I\neq M}} \prod\limits_{i\in I}[A_i]_t(-1)^{|K|}\sum\limits_{\tilde K\subset K}
\sum\limits_{\substack{G\subset  A_J\\ G|\{A_j\}_{j\in J}}}\frac{\varrho^{I}_{K, G}}{4N}[A_{\tilde K} \cup G]_t[A_{K\setminus \tilde K}\cup G^c]_t \\
&+\sum\limits_{\ell\in M}\Big(\prod\limits_{i\in M\setminus \{\ell\}}[A_i]_t\sum\limits_{j\in A_\ell}\sum\limits_{y\in X_j}\mu^{j}_{yx^{}_j}[A_\ell\setminus \{j\}]_t \Big)\\
&-\sum\limits_{\ell\in M}\Big(\prod\limits_{i\in M\setminus \{\ell\}}[A_i]_t\sum\limits_{j\in A_\ell}\sum\limits_{y\in X_j\setminus\{x^{}_j\}}\mu^{j}_{x^{}_jy}[A_\ell]_t \Big) \Big].
\end{split}\]
So, the arising terms are the same as in the pure recombination process plus linear terms. It is therefore clear that we have moment closure here as well.

\section{Recombination and Resampling}
In this section, we set $b>0$ and the mutation rates zero again, so we look at the Moran model  with recombination and resampling only. At first glance, one may think that resampling has no effect on the expectation, since the process with resampling alone has a constant mean.  Indeed, the first derivative of the mean looks the same as in the pure recombination case:
\begin{equation}\label{resampling_2sites_1}
\frac{d}{dt�}\EE\big[[1,2]_t\big]=\frac{\varrho^{}_1}{N�}\EE\big[[1]_t[2]_t-[1,2]_tN\big].
\end{equation}
However, due to resampling, the one-site marginal processes are no longer constant, so we do not have instantaneous moment closure any more (cf. \eqref{resampling}: at a resampling event, the frequency of alleles may change). The  derivative of their product is obtained after an elementary but lengthy calculation:
\begin{equation}\label{resampling_2sites_2}
\frac{d}{dt}\EE\big[[1]_t[2]_t\big]= \frac{b}{�N}\EE\big[[1,2]_tN-[1]_t[2]_t\big].  
\end{equation}
We obtain a finite linear system of differential equations, namely \eqref{resampling_2sites_1} and \eqref{resampling_2sites_2}. In particular, we obtain
\begin{equation}\label{lde}
\frac{d}{dt}\EE\big[[1,2]_tN-[1]_t[2]_t\big]= -\big(\frac{\varrho^{}_1}{N}+\frac{b}{�N}\big)\EE\big[[1,2]_tN-[1]_t[2]_t\big]
\end{equation}
with the obvious exponential solution. The term $[1,2]_tN-[1]_t[2]_t$ is a correlation function, a so called linkage disequilibrium, which is widely used in population genetics. We see that both, recombination and resampling, reduce correlations between sites. 

For more than two sites, exact moment closure can no longer be established. To make this plausible, we will only present the derivative of $\EE\big[[1]_t[2]_t[3]_t\big]$ (again, the calculation is elementary but lengthy), which is a term that emerges in the derivatives of the process on three sites due to recombination: 
\[\begin{split}
\frac{d}{dt�}\EE&\big[[1]_t[2]_t[3]_t\big]=\frac{b}{N�}\EE\big[[1]_t[2,3]_tN+[2]_t[1,3]_tN+[3]_t[1,2]_tN+[1]_t[1,2]_t[1,3]_t\\
&+[2]_t[1,2]_t[2,3]_t
+[3]_t[1,3]_t[2,3]_t
-3[1]_t[2]_t[3]_t-[1]^2_t[1,2,3]_t-[2]^2_t[1,2,3]_t-[3]^2_t[1,2,3]_t\big].
\end{split}\]
The last three terms are quadratic and it is clear that further differentiating will lead to terms such as $[1]^3_t[2]_t[3]_t$ whose derivative will contain moments of even higher order. Thus, the interaction between recombination and resampling destroys moment closure.

\section{Conclusion}

In this paper, we have extended the single-crossover Moran model from \cite{baakeherms} to include general  recombination.
The dynamics of the expectation under general recombination becomes significantly more complicated. In particular, it now deviates from the dynamics in the infinite population model. The reason is the loss of independence of certain marginal processes.  

As is usual with nonlinear processes, the dynamics of a given moment requires higher moments.  Nevertheless, in this case after a finite number of steps no additional terms emerge. This is due to the fact that the arising processes may in each step  be described by a partition of sites. When mutation is included, this exact moment closure persists, but the arising processes can no longer be described by a partition of sites. Altogether, we have an exception to the rule that the dynamics of the moments of nonlinear processes lead to infinite hierarchies of ODE's.

This exact moment closure gets lost when we extend the model to include genetic drift (i.e., resampling). This is, of course, disappointing since the Moran model with recombination alone is mathematically interesting, but of limited biological value. Nevertheless, the resulting hierarchy of moments might be interesting to analyse with respect to the various possibilities of approximate moment closure.

Furthermore, the arising terms such as $\EE\big[\prod\limits_{\ell\in M}[A_\ell]_t\big]$ are of considerable interest in population genetics beyond this moment closure procedure, since they are the building blocks of the linkage disequilibria \cite{burger} that are so important in population genetics (compare \eqref{lde} for the simplest example).

\end{document}